\theoremstyle{plain}
\numberwithin{equation}{section}
\newtheorem{thm}{Theorem}[section]
\newtheorem{theorem}[thm]{Theorem}
\newtheorem{lemma}[thm]{Lemma}
\newtheorem{corollary}[thm]{Corollary}
\def\cp{{ \,\Box\,}}
\theoremstyle{definition}
\begin{document}
\fancyhead{}
\renewcommand{\headrulewidth}{0pt}
\fancyfoot{}
\fancyfoot[LE,RO]{\medskip \thepage}
\fancyfoot[LO]{\medskip MISSOURI J.~OF MATH.~SCI., SPRING 2021}
\fancyfoot[RE]{\medskip MISSOURI J.~OF MATH.~SCI., VOL.~33, NO.~1}

\setcounter{page}{1}

\title[On dispersability of some products of cycles]{On dispersability of some products of cycles}
\author{Samuel S. Joslin}
\address{Department of Mathematics and Statistics\\
                Georgetown University}
\email{ssj34@georgetown.edu}
\author{Paul C. Kainen}
\address{Department of Mathematics and Statistics\\
                Georgetown University}
\email{kainen@georgetown.edu}
\author{Shannon Overbay}
\address{Department of Mathematics\\
                Gonzaga University}
\email{overbay@gonzaga.edu}

\begin{abstract}
We show that the matching book thickness of the Cartesian product of two odd-length cycle-graphs is five if at least one of the cycles has length 3 or 5.
\end{abstract}

\maketitle

\section{Introduction}
\noindent The chromatic index $\chi'(G)$ of a graph $G$ is the least number of colors needed in a proper coloring of the edges, so that no two same-color edges share a common endpoint.  Unlike chromatic number, which can be as small as 2 and as large as the number of vertices, chromatic index has only two possible values, $\Delta(G)$ or $1+\Delta(G)$, according to whether $G$ is of Vizing class I or II \cite[p. 133]{harary}, where $\Delta(G)$ denotes maximum degree of $G$.

Surprisingly, for a number of graph families, it is possible to enrich the edge-colorings so that they are compatible with a geometric layout of the graph, 
yet not requiring additional colors \cite{abgkp2018,bk79,jko-2021b,pck-90,pck-bica,ko-2020,so-thesis,shao-et-al,shao-3,shao-2}.  For instance,  we showed in \cite{pck-bica} that the Cartesian product of cycles has a layout and compatible coloring with $\chi'(G)$ colors if at most one of the cycles is of odd length. 
Also, cubic planar bipartite,  regular complete bipartite, and hypercube graphs all have such enriched $\chi'$-edge-colorings \cite{abgkp2018,ko-2020,bk79,so-thesis}.
This is not true for all graphs; the complete graph $K_{2r}$, $r \geq 2$, is class I but requires one extra color to be compatible with its layout (which is unique as the graph is complete). 

For a convex geometric layout of a graph $G$, the chromatic number of the intersection graph of the open edges of the drawing is the {\it book thickness} \cite{bk79}. If we take {\it closed} edges in forming the intersection graph, then the chromatic number is the {\bf matching book thickness} \cite{pck-bica} of the graph-layout pair. The matching book thickness $mbt(G)$ of $G$ is the minimum over all its layouts.

\par
\ \\
\noindent\rule{0.84in}{0.4pt} \par
\medskip
\indent\indent {\fontsize{8pt}{9pt} \selectfont DOI: 10.35834/YYYY/VVNNPPP \par}
\indent\indent {\fontsize{8pt}{9pt} \selectfont MSC2020: 54A40 \par}
\indent\indent {\fontsize{8pt}{9pt} \selectfont Key words and phrases: matching book thickness; chromatic index; dispersable graph\par}

\thispagestyle{fancy}
\vfil\eject
\fancyhead{}
\fancyhead[CO]{\hfill ON DISPERSABILITY OF SOME PRODUCT OF CYCLES}
\fancyhead[CE]{S. S. JOSLIN, P. C. KAINEN, AND S. OVERBAY  \hfill}
\renewcommand{\headrulewidth}{0pt}

By definition, $mbt(G) \geq \chi'(G)$. Call $G$ {\bf dispersable} \cite{bk79}
if $mbt(G) = \Delta(G)$ and {\bf nearly dispersable} \cite[p 88]{so-thesis} if $mbt(G) = 1 + \Delta(G)$. In \cite[p 87]{so-thesis}, Overbay proved that {\it if $G$ is regular and dispersable, then it must be bipartite}.  Thus, every regular non-bipartite Vizing class I graph has matching book thickness at least $1 + \Delta(G)$.  No currently known graph (of either Vizing class) has $mbt(G) \geq 2 + \Delta(G)$ though it can be shown \cite{bmw2006} that such graphs exist when $\Delta(G) \geq 9$ and the order $|G|$ is sufficiently large.

The question of determining which graphs are dispersable or nearly dispersable has been considered since the conjecture of \cite{bk79} that regular bipartite graphs are dispersable. Alam-et-al in \cite{abgkp2018} disproved that conjecture by finding nearly dispersable regular bipartite graphs but showed that 3-connected cubic planar bipartite  graphs {\it are} dispersable.  In \cite{ko-2020}, we extended the result of \cite{abgkp2018}, by showing  the 3-connected case implies the general case as conjectured.

In this paper, graphs are simple and ``$\cp$'' denotes Cartesian product; $C_n$ is an $n$-cycle.
We continue \cite{pck-bica} by studying $mbt(C_m \cp C_n)$.  A non-self-crossing cycle is dispersable if it is of even length and nearly dispersable if it is of odd length. But cycles can have crossings if the crossing edge-pairs have distinct colors.  In \cite{pck-bica} it was proved that the product of two even cycles is dispersable, and the product of an even and an odd cycle is nearly dispersable.  Here we show $C_m \cp C_n$ is nearly dispersable if $mn$ is odd and $\min(m,n) \leq 5$ as a first step towards our conjecture that the same conclusion holds for any product of two odd cycles, and hence for all cycle-pairs, odd or even. 

Other recent results include Shao-et-al \cite{shao-et-al} who show $K_n \cp C_m$ is nearly dispersable when $n,\, m \geq 3$, which implies Corollary \ref{co:C3} below.  The Halin trees with $\Delta \geq 4$ are also dispersable \cite{shao-3}.
Fiorini \cite{fiorini} proved that every outerplanar  graph (except an odd cycle) has chromatic index equal to its maximum degree, so outerplanar graphs (other than odd cycles) are dispersable; see also \cite{shao-2}. 

The paper is organized as follows: Section 2 gives nearly dispersable embeddings for $C_3 \cp C_3$ and $C_5 \cp C_5$.  
In Section 3, we prove that for suitable graphs $H$, there are certain properties of a matching book embedding of $H \cp C_s$  which enable its extension to a drawing of $H \cp C_{s+r}$, where $r$ is positive and  even, in such a way that the extended drawing maintains the required properties. 
The extension process is based on replication of a substructure we call a  ``seed.''  Such seeds do exist in the drawings of $C_3 \cp C_3$ and $C_5 \cp C_5$. 
We conclude in Section 4 with some remarks on further applications of our methods to graphs with a repetitive structure.

\section{Cartesian product and thickness}

\noindent Two distinct edges in a graph $G$ are said to be {\bf adjacent} if they share a unique vertex.  A {\bf layout} of a graph $G$ is a cyclic permutation $\omega$ of $V_ G$, two edges are called {\bf conflicting} (w.r.t. $\omega$) if their four endpoints occur in alternating order (equivalently, the corresponding chords cross). Let $[k]:=\{1,\ldots,k\}$.
A $k$-page {\bf matching book embedding mbe} is a triple $(G, \omega, c)$, where $\omega$ is a layout of $G$ and $c: E_G \to [k]$  is a surjection such that $c(e) \neq c(e')$ whenenver $e$ and $e'$ are adjacent or $e$ and $e'$ conflict with respect to $\omega$.  Thus, $mbt(G)$ is the least $k$ such that some $k$-page mbe  $(G,\omega,c)$ exists, while $mbt(G,\tau)$ is the least $k$ such that some $k$-page mbe  $(G,\tau,c)$ exists.

To show that $C_m \cp C_n$ is nearly dispersable when $m$ and $n$ are odd and $\min(m,n) \leq 5$,
we start with the cases $m=3=n$ and $m=5=n$.

\begin{lemma}
With the previous notation, $C_3 \cp C_3$ is nearly dispersable.
\label{lm:C3}
\end{lemma}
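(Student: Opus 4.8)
The plan is to show $mbt(C_3 \cp C_3) = 5$ by bounding it from both sides; since $G := C_3 \cp C_3$ is $4$-regular we have $\Delta(G) = 4$, so ``nearly dispersable'' means precisely $mbt(G) = 5$. For the lower bound I would note that $G$ has nine vertices and is $4$-regular. Any proper $4$-edge-coloring of a $4$-regular graph splits its edges into four perfect matchings, and a perfect matching requires an even number of vertices; as $|V_G| = 9$ is odd, no such coloring exists. Hence $\chi'(G) = 1 + \Delta(G) = 5$, i.e.\ $G$ is Vizing class II, and since $mbt(G) \geq \chi'(G)$ by definition we get $mbt(G) \geq 5$. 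Equivalently, $G$ is regular and non-bipartite, so by Overbay's theorem it cannot be dispersable, forcing $mbt(G) \geq 1 + \Delta(G)$.

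For the matching upper bound I would exhibit an explicit $5$-page matching book embedding. Label the vertices $v_{ij}$ with $i,j \in \{0,1,2\}$, so that the edges are the nine ``row'' edges $v_{ij}v_{i,j+1}$ and the nine ``column'' edges $v_{ij}v_{i+1,j}$ (indices mod $3$). I would fix a cyclic order $\omega$ of the nine vertices around a circle and partition the eighteen edges into five color classes, each of which must be a non-crossing matching with respect to $\omega$: no two like-colored edges may share a vertex or have interleaving endpoints. A non-crossing matching on nine circular points has at most four edges, so the eighteen edges distribute with at most four per page (for instance three pages of four edges and two of three). I would display $\omega$ together with the five pages as separate drawings, so that the absence of crossings on each page can be seen directly.

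The verification splits into the adjacency condition and the conflict condition, and the latter is the real obstacle. Confirming the adjacency condition is routine: one checks that the four edges incident to each vertex carry four different colors, i.e.\ that $c$ is a proper edge-coloring. The difficulty is the conflict condition, since a cyclic order that makes the coloring proper may nonetheless force two same-colored chords to cross. The crux is therefore choosing $\omega$ so that a crossing-free assignment into only five pages is possible at all; I would look for an order placing each triangle's three edges so that its ``long'' chord does not interleave with edges of its own color elsewhere, and then adjust the coloring to meet both constraints at once. Once a suitable $\omega$ and coloring are exhibited, both conditions reduce to a finite check, establishing $mbt(C_3 \cp C_3) = 5$.
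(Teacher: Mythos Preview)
Your strategy is exactly the paper's: establish the lower bound $mbt(G)\ge 5$ (the paper relegates this to the introduction via Overbay's result that regular dispersable graphs must be bipartite, which you also cite), and then exhibit an explicit $5$-page matching book embedding for the upper bound. Your parity argument that $\chi'(G)=5$ is correct and slightly more self-contained than the paper's appeal to the general theorem.

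The gap is that you never actually produce the embedding. You correctly identify that ``the crux is therefore choosing $\omega$ so that a crossing-free assignment into only five pages is possible,'' but then stop; the paper's entire proof of this lemma consists of writing down the specific layout $\omega=(1,2,3,6,5,4,7,8,9)$ together with five explicit color classes (four pages of four edges and one of two, matching your count) and a figure, after which the adjacency and conflict conditions are a direct inspection. Until you supply a concrete $\omega$ and $c$ and carry out that finite check, your proposal is a correct outline of the method rather than a proof.
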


\begin {proof}
We show $C_3 \cp C_3$ is nearly dispersable by giving an explicit counter-clockwise ordering $\omega$ and edge coloring $c$. Let $G$ be $C_3 \cp C_3$. 
Order the vertices of $G$ counter-clockwise around a circle as follows: $$\omega = (1, 2, 3, 6, 5, 4, 7, 8, 9)$$ 

\noindent Partition the edges of $G$ into 5 pages using the following coloring:

\begin{enumerate}
\item Red: $1{-}2, \,3{-}9, \,5{-}6, \,8{-}7$ (medium width, solid)
\item Black: $ 1{-}3,\, 9{-}6,\, 8{-}5,\, 7{-}4$ (thin, dashed)
\item Green: $2{-}3,\, 6{-}4,\, 1{-}7, \,8{-}9$ (thick, dashed)
\item Blue: $2{-}8,\, 4{-}5$ narrow, irregular dashing pattern
\item Purple: $3{-}6,\, 2{-}5,\, 1{-}4, \,9{-}7$ (thin, solid)
\end{enumerate}
See Figure \ref{Figure 1}; same color/line-type coding is used throughout the paper.
\end{proof}

\begin{figure}[ht!]
\centering
\includegraphics[width=46mm]{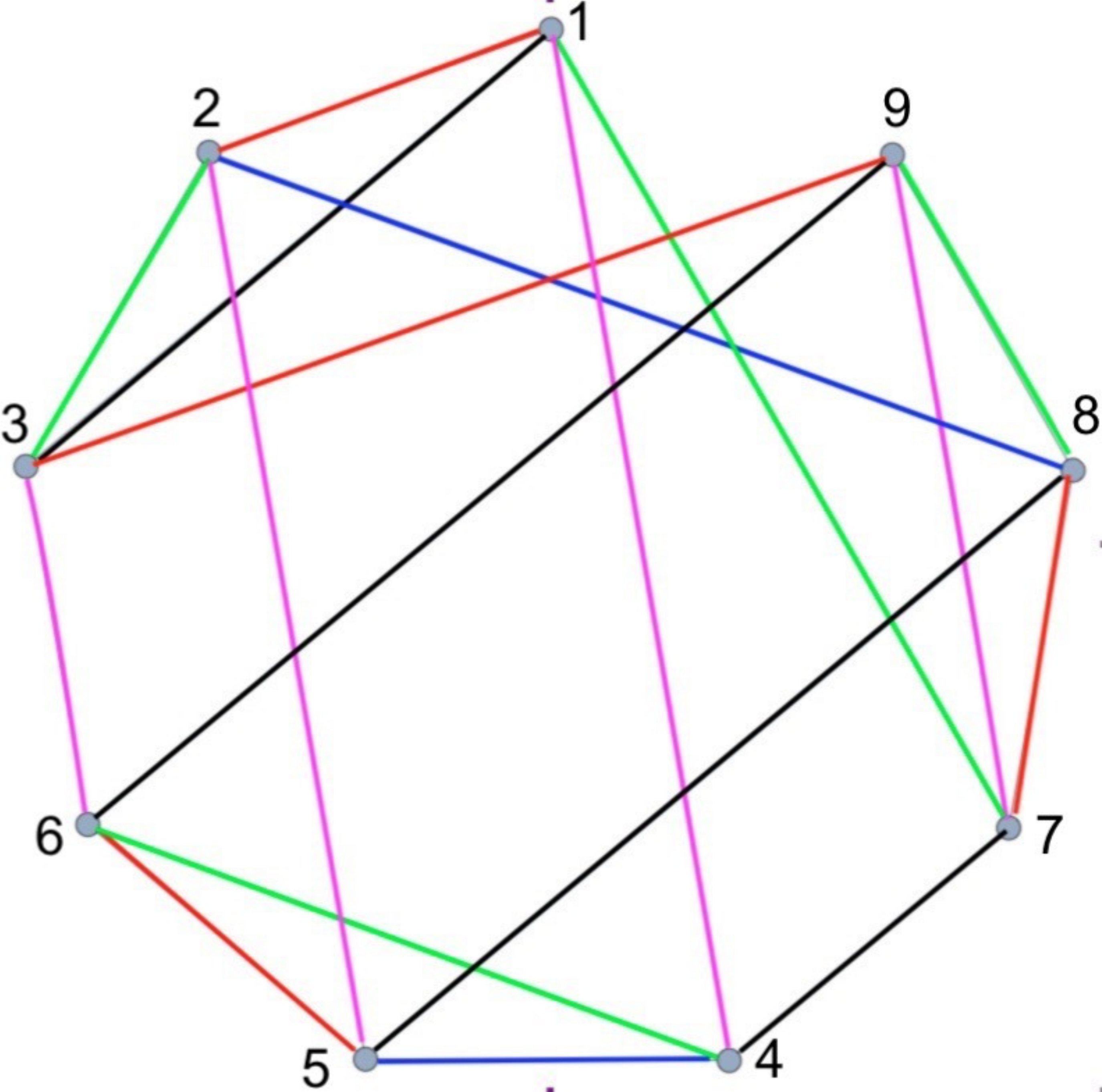}
\caption{A nearly dispersable embedding of $C_3 \cp C_3$ \label{Figure 1}}
\end{figure}

\begin{lemma}
With the previous notation, $C_5 \cp C_5$ is nearly dispersable.
\label{lm:C5}
\end{lemma}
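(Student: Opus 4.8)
The goal is to prove $mbt(C_5 \cp C_5) = 1 + \Delta = 5$. The lower bound $mbt \geq 5$ is already in hand: $C_5 \cp C_5$ is $4$-regular and, being a product of two odd cycles, non-bipartite, so by Overbay's theorem it cannot be dispersable, whence $mbt \geq 1 + \Delta = 5$. (Equivalently, $C_5 \cp C_5$ has $25$ vertices, an odd order carrying positive even degree, so it has no perfect matching, is therefore Vizing class II, and $mbt \geq \chi' = 5$.) Thus, exactly as in the proof of Lemma~\ref{lm:C3}, the real task is to produce an explicit $5$-page matching book embedding: a layout $\omega$ of the $25$ vertices together with a surjective $5$-coloring $c$ of the $50$ edges meeting the two mbe conditions.

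The plan is to label the vertices $1,\dots,25$ as a $5\times 5$ toroidal array, each row and each column inducing a $5$-cycle, and to take $\omega$ to be the boustrophedon listing of the rows --- row $1$ left to right, row $2$ right to left, and so on --- the natural generalization of the ordering $(1,2,3,6,5,4,7,8,9)$ used for $C_3 \cp C_3$. I would then distribute the $50$ edges among five pages so that (i) at every vertex the four incident edges receive four distinct colors, and (ii) within each color class no two chords alternate around $\omega$. The run of row-edges and column-edges incident along a given strand suggests a cyclic color pattern, and the aim is to reserve one or two sparse pages for the troublesome wraparound chords, mirroring the role of the two-edge blue page $\{2{-}8,\,4{-}5\}$ in the $C_3\cp C_3$ embedding.

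Verification splits into two parts, carried out page by page. Checking condition (i), that $c$ is a proper edge-coloring, is routine. The hard part will be condition (ii): controlling conflicts among the ``long'' chords created by the cyclic wraparound of the two cycle factors, since these are the edges most prone to crossing one another and the row/column edges. With $25$ vertices and $50$ edges the bookkeeping is heavier than in the $3\times 3$ case, but it remains finite and directly checkable, and a supporting figure (as in Figure~\ref{Figure 1}) would make each page's non-crossing property visually evident; the $\mathbb{Z}_5\times\mathbb{Z}_5$ translational symmetry of the layout can be used to cut down the number of distinct cases. Should the plain boustrophedon order leave an unavoidable monochromatic crossing, I would locally perturb $\omega$ by transposing a few neighboring vertices and rebalance the five color classes accordingly; this search for a simultaneously proper and non-crossing assignment is where essentially all of the effort lies.
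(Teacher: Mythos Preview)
Your lower-bound argument is correct, and your overall strategy---exhibit an explicit $5$-page matching book embedding using an en bloc, roughly boustrophedon layout---is exactly the paper's. But what you have written is a plan, not a proof: the entire content of this lemma \emph{is} the concrete layout $\omega$ and coloring $c$, and you have supplied neither. You yourself concede that ``this search for a simultaneously proper and non-crossing assignment is where essentially all of the effort lies,'' and that is precisely what is missing.

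The gap is not merely cosmetic. The paper's actual layout is
\[
\omega = (1,2,3,4,5,\;10,9,8,7,6,\;11,12,13,15,14,\;19,20,16,17,18,\;23,22,21,25,24),
\]
which departs from the plain boustrophedon in three of the five blocks (e.g.\ $13,15,14$ rather than $13,14,15$, and $19,20,16,17,18$ rather than $20,19,18,17,16$). So your anticipated need to ``locally perturb $\omega$'' is real, and finding perturbations that work---together with a compatible $5$-coloring of all $50$ edges---is a genuine piece of combinatorial work that cannot simply be asserted to succeed. Your appeal to $\mathbb{Z}_5\times\mathbb{Z}_5$ translational symmetry is also optimistic: any fixed cyclic ordering $\omega$ destroys most of that symmetry, so the non-crossing verification remains an edge-by-edge check, which is exactly what the paper carries out explicitly.
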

\begin{proof}
We will show $C_5 \cp C_5$ is nearly dispersable by giving an explicit counter-clockwise ordering $\omega$ and edge coloring $c$. Let $G$ be $C_5 \cp C_5$. Order the vertices of G counter-clockwise around a circle as follows: 
$$\omega {=} 
(1,2,3,4,5,10, 9, 8, 7, 6, 11, 12, 13, 15, 14, 19, 20, 16, 17,18, 23, 22, 21, 25, 24)$$
\noindent Partition the edges of $G$ using the following coloring $c$: 
\begin{enumerate}
\item Purple: $1{-}2,\,3{-}4,\,9{-}14,\,8{-}7,\,11{-}15,\,12{-}13,\,24{-}19,\,25{-}20,\\\,21{-}16,\,22{-}17,\,23{-}18\,$
\item Blue: $2{-}3,\,1{-}5,\,10{-}9,\,7{-}6,\,11{-}12,\,13{-}18,\,15{-}20,\,14{-}19,\,16{-}17,\\\,21{-}22,\,23{-}24\,$
\item Red: $5{-}10,\,4{-}9,\,3{-}8,\,2{-}7,\,1{-}6,\,12{-}17,\,13{-}14,\,22{-}23,\,25{-}21\,$
\item Green: $24{-}25,\,1{-}21,\,2{-}22,\,3{-}23,\,4{-}5,\,10{-}15,\,8{-}13,\,7{-}12,\,6{-}11,\\\,19{-}18,\,20{-}16\,$
\item Black: $4{-}24,\,5{-}25,\,10{-}6,\,9{-}8,\,11{-}16,\,15{-}14,\,19{-}20,\,18{-}17\,$
\end{enumerate}
See Figure \ref{Figure 2}. Recall the line-type coding from Figure \ref{Figure 1}. \end{proof}

\begin{figure}[ht!]
\centering
\includegraphics[width=80mm]{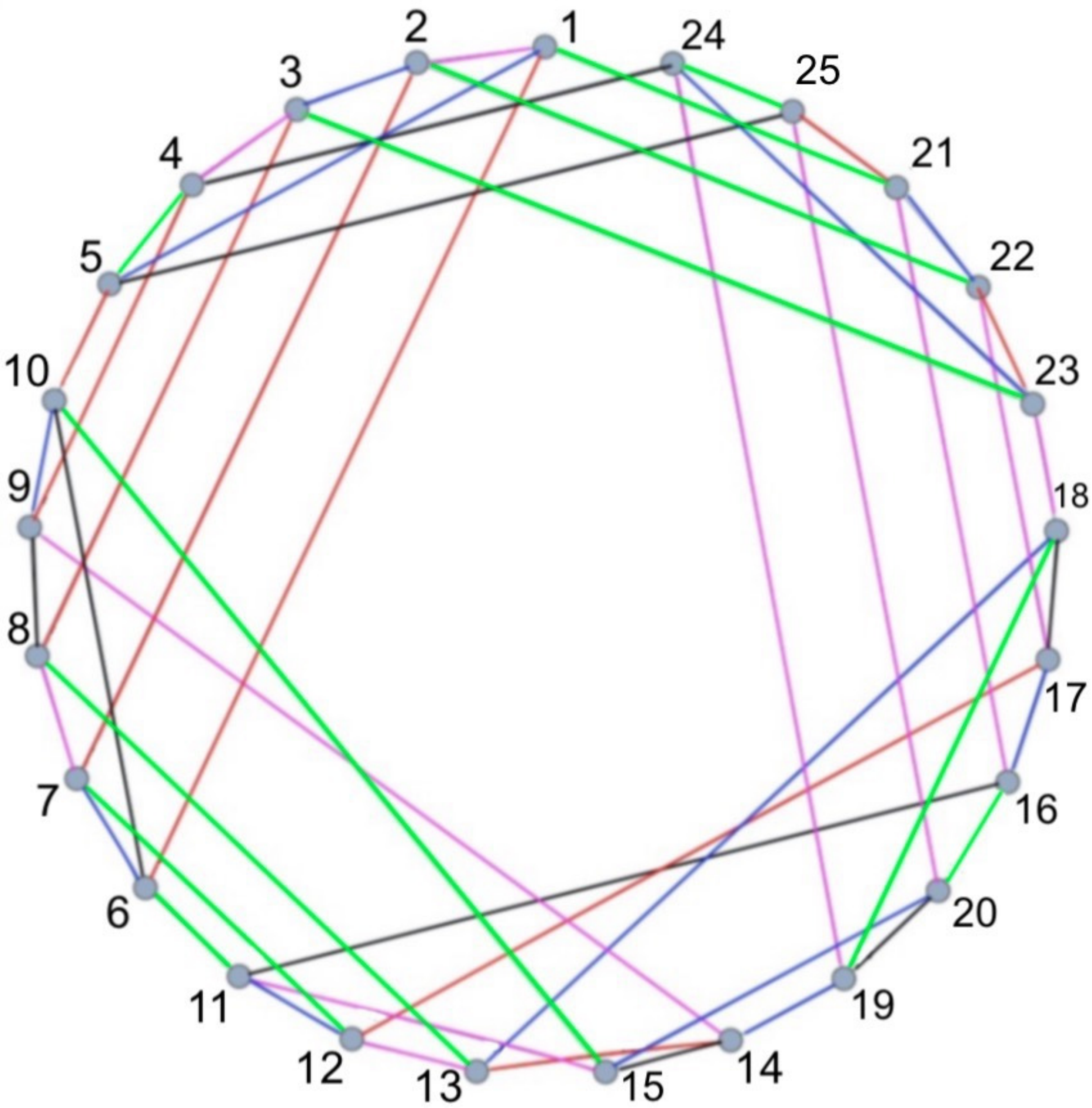}
\caption{ A nearly dispersable  embedding of $C_5 \cp C_5$ \label{Figure 2}}
\end{figure}

\section{Structure and extensions of the layouts}

\noindent The constructions in Lemmas \ref{lm:C3} and \ref{lm:C5} use layouts and edge-colorings for 
$C_3 \cp C_3$ and $C_5 \cp C_5$ which satisfy some extra properties that allow them to be extended to $C_3 \cp C_n$ and $C_5 \cp C_n$, $n\geq 3$ (or $\geq 5$) odd.  We describe these properties in a general context.

Let $s \geq 3$ and let $H$ be any $t$-regular nonbipartite graph, $t \geq 2$; later $H = C_3$ and $s=3$ or $H = C_5$  and $s=5$.  Put $G=H \cp C_s$.  A layout $\omega$ of $G$ will be called {\bf en bloc} if (1)
the set (or {\bf block}) of all vertices with a common second coordinate occur consecutively in $\omega$
and (2) the blocks corresponding to the vertices of $C_s$ occur in the natural counterclockwise ordering of $C_s$.  
The vertices in each block induce a subgraph of $H \cp C_s$ which is isomorphic to $H$ but the vertex orderings can be different from block to block.

By the definition of Cartesian product, each block is connected to the adjacent blocks before and after (in counterclockwise order)  by matchings (the set of $|H|$ independent edges in the product which correspond to the appropriate edge of $C_s$).  We call these two matchings the {\bf before} and {\bf after} matchings, and they can contain crossings (both intra- and  inter-block).

A matching book embedding of $H \cp C_s$ is {\bf extensible} if the layout is en bloc, there are $t+3$ pages (i.e., the embedding is nearly dispersable), and two additional conditions hold: (a) some block (the {\bf seed}) uses only $t+1$ edge-colors and (b) the two unused colors are {\bf separated} by the before and after matchings of the seed in the sense that neither matching contains edges of {\it both} unused colors.

For example, the block with vertices numbered $11,12,13,15,14$ in Figure  \ref{Figure 2} (at the bottom) uses colors purple, blue, red, and black so it fails (a).
In fact, this is the only block in either figure which fails to satisfy (a); all of the other blocks satisfy both (a) and (b) and hence are seeds.

However, it is possible for a block to satisfy (a) but not (b).  Indeed, a block whose layout is a $5$-cycle could be edge-colored $r, b, r, g$ in counterclockwise order along the circle and one inner edge colored $b$. The before-matching could be $p, k, k, k, k$, while the after-matching is $g, p, p, p, p$.  Condition (b) fails in the before-matching.  See Figure \ref{Figure 3}.

\begin{figure}[ht!]
\centering
\includegraphics[width=80mm]{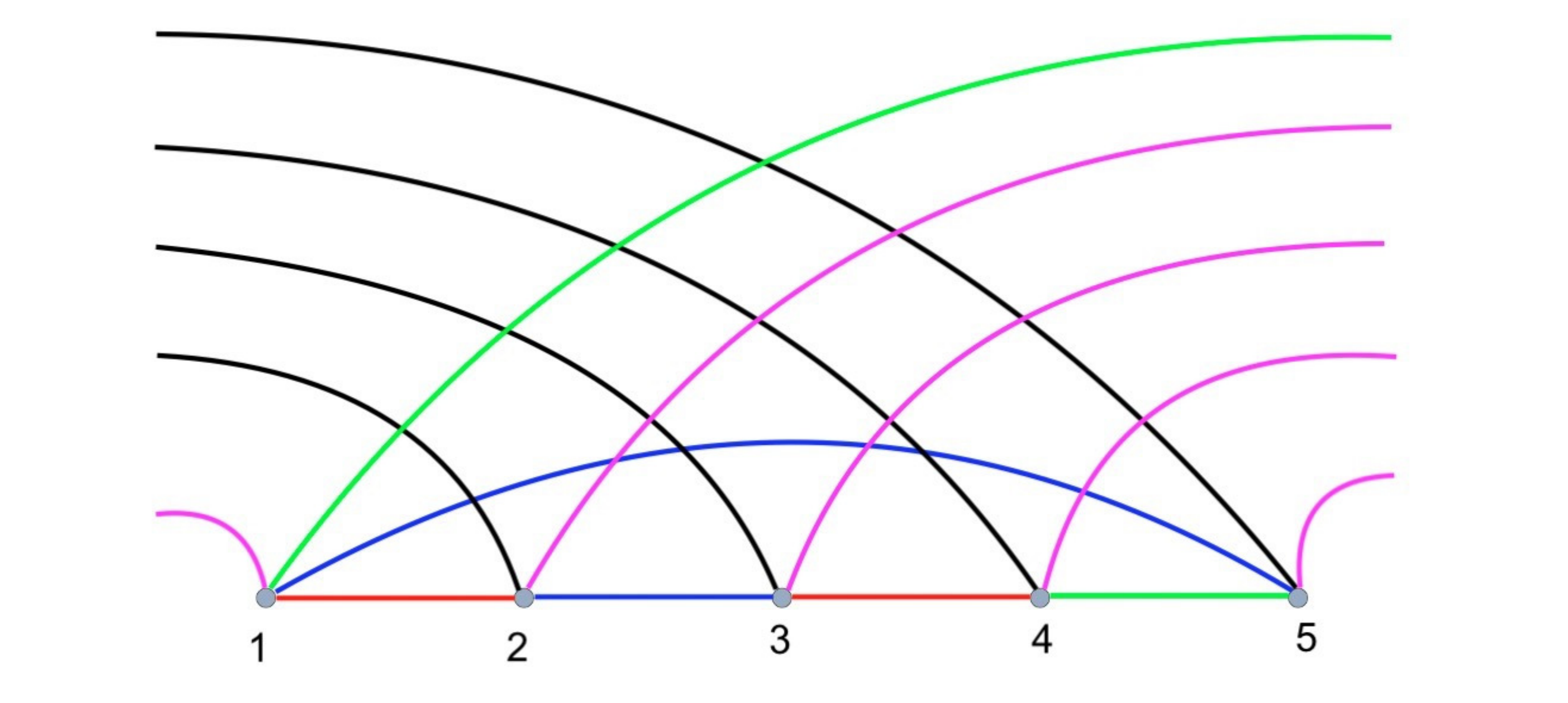}
\caption{A block that satisfies (a) but not (b)\label{Figure 3}}
\end{figure}

\begin{theorem}
Let $H$ be a $t$-regular, nonbipartite graph and let $s \geq 3$.  If $H \cp C_s$ has an extensible book embedding,  then {\rm for every even integer} $r>0$, $H \cp C_{s+r}$ also has an extensible book embedding which extends the given book embedding.
\end{theorem}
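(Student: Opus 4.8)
The plan is to build the extended embedding by \emph{replicating the seed}. Fix the seed block $B_0$, with its two unused (``free'') colours $\alpha,\beta$, and let $N$ be the block that immediately follows $B_0$ in the layout. I would insert $r$ fresh copies $B_1,\dots,B_r$ of the seed into the cyclic order, in the arc between $B_0$ and $N$, so that the blocks now read $\dots,B_0,B_1,\dots,B_r,N,\dots$; this keeps the layout en bloc and realises the cycle $C_{s+r}$. Each new block carries the seed's internal edge-colouring verbatim, so every $B_i$ uses only the same $t+1$ colours and still omits $\alpha$ and $\beta$.

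The orientations are made to alternate: $B_i$ is drawn in reversed vertex order relative to the seed when $i$ is odd and in the seed's own order when $i$ is even. Since adjacent inserted blocks then have opposite orientations, the product matching joining $B_{i-1}$ to $B_i$ is a family of \emph{nested} chords, hence occupies a single page; I will colour these $r$ new rungs alternately $\alpha,\beta,\alpha,\dots$. Reversing a block is a reflection, so it preserves all crossings and adjacencies; each $B_i$'s internal colouring therefore stays proper, and a nested rung (coloured $\alpha$ or $\beta$) can only conflict with edges of the \emph{other} free colour, never with the $t+1$ block colours. The decisive bookkeeping point is that $r$ is \emph{even}: this forces $B_r$ to inherit the seed's own orientation, so the rung from $B_r$ to $N$ is geometrically identical to the seed's original after-matching and may be given the very same colour pattern. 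Consequently $N$ and the whole remainder of the original drawing are left untouched, and only the $r$ interior rungs and the relocated after-matching require checking. A short verification shows the new rungs meet no unaccounted conflicts: all inserted chords lie in the arc strictly between $B_0$ and $N$, disjoint from the original chords except at the two boundaries.

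What remains, and what I expect to be the main obstacle, is reconciling the forced alternation with the two boundary matchings. At the seed end the first new rung is adjacent to, and may cross, the \emph{retained} before-matching $M_b$; at the far end the last new rung abuts the \emph{relocated} after-matching $M_a$. Because a rung is nested and monochromatic, its leftmost chord conflicts with \emph{every} chord of the neighbouring matching, so the first rung's colour must avoid all colours on $M_b$ and the last rung's colour must avoid all colours on $M_a$. This is exactly where the separation condition (b) is indispensable: since neither $M_b$ nor $M_a$ contains both $\alpha$ and $\beta$, each one-sided requirement can be met by a single free colour; and since $r$ is even, the alternation hands the two \emph{distinct} free colours to the first and last rungs, matching the two independent requirements. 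Carrying this colour choice through consistently is the crux of the argument, and it is precisely the step that uses hypothesis (b) together with the parity of $r$.

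Finally I would check that the output is itself extensible, so the process can be iterated. The block $B_1$ is a copy of the seed and so satisfies (a); its before- and after-rungs are the monochromatic rungs coloured $\alpha$ and $\beta$ respectively, so neither contains both free colours and (b) holds for $B_1$ as well. Hence $B_1$ is a seed of the extended nearly dispersable embedding, which is therefore again extensible and extends the given one, completing the proof.
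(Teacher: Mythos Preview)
Your proposal is correct and follows essentially the same construction as the paper: replace the seed by a sequence of $r+1$ alternately reversed copies of it, join consecutive copies by the nested monochromatic ``rung'' matchings coloured alternately with the two free colours, and leave the original before- and after-matchings in place at the two ends. Your write-up is in fact more detailed than the paper's brief sketch --- you make explicit the role of condition~(b) together with the parity of $r$ at the boundary and you verify that the output embedding is again extensible, both of which the paper leaves to the reader.
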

\begin{proof}
Take some seed $H_j := H \times \{j\}$ in the book embedding of $G := H \cp C_s$, where $1 \leq j \leq s$, and replace it by a sequence of length $r+1$, beginning (and ending) with an identical copy of the seed (and its edge-coloring), alternating with $H_j^{op}$
which is an order-reversed copy of $H_j$.  The iteratively reversed blocks are joined by monochromatic, noncrossing matchings which alternate with the two colors not used in $H_j$.  However, the before-matching of the first-counterclockwise copy of $H_j$ remains unchanged as also does the after-matching from the last-ccw copy of $H_j$.  This extends the layout and edge-coloring of $H \cp C_s$ to a layout and edge-coloring  of $H \cp C_{s+r}$ retaining extensibility.  See Figure \ref{Figure 4}, where $H = C_3, \;s=3, \;r=2$.
\end{proof}

\begin{figure}[ht!]
\centering
\includegraphics[width=80mm]{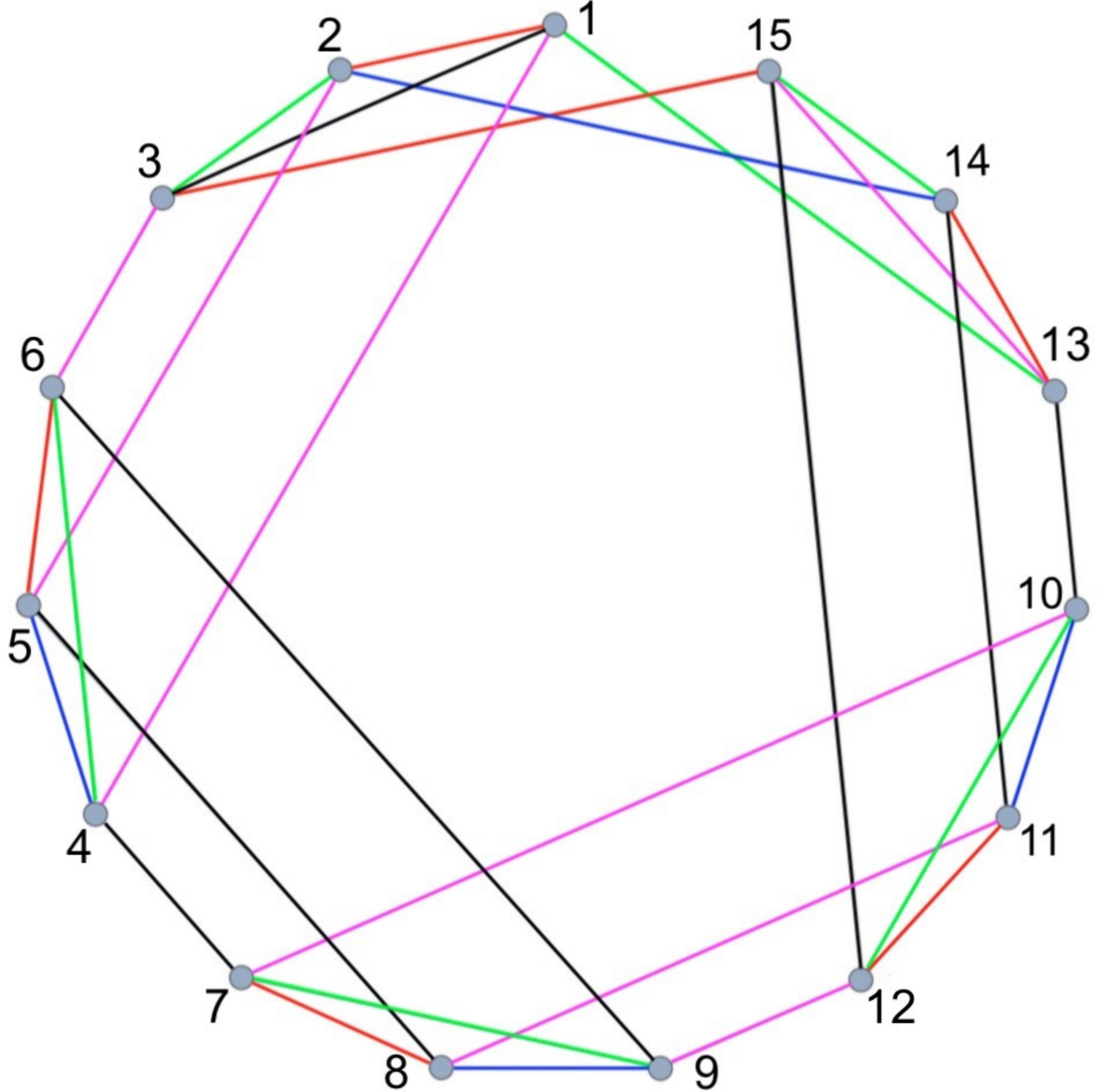}
\caption{A nearly dispersable  embedding of $C_3 \cp C_5$ \label{Figure 4}}
\end{figure}

As the layouts and edge-colorings in Lemmas 1 and 2 satisfy the conditions of Theorem 1, by \cite{pck-bica} for $n$ even, we have the following consequences.

\begin{corollary} 
If $n \geq 3$, then $C_3 \cp C_n$ is nearly disperable.
\label{co:C3}
\end{corollary}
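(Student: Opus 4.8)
The plan is to split on the parity of $n$ and handle the two cases with different tools. For \emph{even} $n$, the graph $C_3 \cp C_n$ is the product of an odd cycle and an even cycle, so I would simply invoke the result of \cite{pck-bica} that any such product is nearly dispersable; no new construction is needed there. The substance lies in the \emph{odd} case, where I would combine the base case of Lemma \ref{lm:C3} with the extension mechanism of Theorem 1.

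First I would record that $H = C_3$ is $2$-regular and nonbipartite, so it meets the hypotheses of Theorem 1 with $t = 2$ and $s = 3$; here nearly dispersable means $mbt = 1 + \Delta = 5 = t+3$, exactly the page count demanded by extensibility. The crucial preliminary step is to verify that the explicit embedding of $C_3 \cp C_3$ furnished by Lemma \ref{lm:C3} is in fact \emph{extensible}. To do this I would identify the three blocks $\{1,2,3\}$, $\{4,5,6\}$, $\{7,8,9\}$, check from the layout $\omega = (1,2,3,6,5,4,7,8,9)$ that they appear consecutively and in the natural cyclic order (so the layout is en bloc), and confirm that exactly $5$ pages are used. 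Then I would exhibit a seed: reading off the coloring, the block $\{1,2,3\}$ uses only the three colors red, black, and green, its after-matching to $\{4,5,6\}$ is entirely purple, and its before-matching from $\{7,8,9\}$ uses red, blue, and green. Hence the two colors absent from this block, namely blue and purple, are separated by the before- and after-matchings, so conditions (a) and (b) both hold and the embedding is extensible. (In fact all three blocks turn out to be seeds, but one suffices.)

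With extensibility established, I would apply Theorem 1: for each odd $n \geq 5$ set $r = n - 3$, a positive even integer, and conclude that $C_3 \cp C_{3+r} = C_3 \cp C_n$ inherits an extensible, hence nearly dispersable, book embedding. Adjoining the base case $n = 3$ of Lemma \ref{lm:C3} covers every odd $n \geq 3$, and together with the even case this yields the corollary for all $n \geq 3$.

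The only genuine obstacle is the verification in the second paragraph that the embedding of Lemma \ref{lm:C3} satisfies conditions (a) and (b); once the seed block and its separated pair of unused colors are pinned down, the remainder is a direct appeal to Theorem 1 and to \cite{pck-bica}, with the choice $r = n - 3$ ensuring that the arithmetic of the extension lands on precisely the odd values of $n$.
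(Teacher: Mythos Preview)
Your proposal is correct and follows essentially the same approach as the paper: split on parity, cite \cite{pck-bica} for even $n$, and for odd $n$ verify that the explicit embedding of Lemma~\ref{lm:C3} is extensible (the paper asserts this in the paragraph preceding the corollaries) and then apply Theorem~1 with $r=n-3$. Your detailed check that the block $\{1,2,3\}$ is a seed---internal colors red, black, green; matching to $\{4,5,6\}$ entirely purple; matching to $\{7,8,9\}$ using only red, blue, green, so that blue and purple are separated---is exactly what the paper leaves to the reader.
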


\begin{corollary}
If $n\geq 3$,  then $C_5 \cp C_n$ is nearly dispersable.
\end{corollary}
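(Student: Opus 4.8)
The plan is to show $mbt(C_5 \cp C_n) = 1 + \Delta = 5$ by pairing one lower bound valid for the whole family with an upper bound proved case by case. For the lower bound I would note that $C_5 \cp C_n$ is $4$-regular and, since $C_5$ has odd length, nonbipartite; Overbay's theorem that a regular dispersable graph must be bipartite then rules out $mbt = \Delta$, and since $mbt \geq \chi' \geq \Delta$ this forces $mbt \geq 1 + \Delta = 5$. It therefore suffices to produce, for each $n \geq 3$, a $5$-page matching book embedding.

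For the upper bound I would split on the parity of $n$. If $n$ is even, then $C_5 \cp C_n$ is the product of an odd cycle with an even cycle, and near dispersability is exactly the result of \cite{pck-bica}, so nothing more is required in that case.

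The substantive case is $n$ odd. Here $H = C_5$ is $2$-regular and nonbipartite, so Theorem 1 applies with $t = 2$ and $s = 5$, provided we have an extensible embedding of $C_5 \cp C_5$ to seed the extension. Lemma \ref{lm:C5} supplies exactly that: it uses $t + 3 = 5$ pages, its layout is en bloc, and---as observed in the discussion following the definition of extensibility---every block except the one on vertices $11,12,13,15,14$ is a seed, so conditions (a) and (b) are met. For $n = 5$ this embedding is already the one we want. For odd $n \geq 7$ I would invoke Theorem 1 with $r = n - 5$, a positive even integer, to obtain an extensible (hence nearly dispersable) $5$-page embedding of $C_5 \cp C_{5+(n-5)} = C_5 \cp C_n$.

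The one value left uncovered is $n = 3$, which cannot be reached by extending from $C_5 \cp C_5$ (that would demand $r = -2$). For it I would use commutativity of the Cartesian product, $C_5 \cp C_3 \cong C_3 \cp C_5$, and apply Corollary \ref{co:C3} with $n = 5$. Together the even case, the odd case $n \geq 5$, and $n = 3$ exhaust all $n \geq 3$. I expect no deep obstacle: the real content has been front-loaded into Lemma \ref{lm:C5} and Theorem 1, and the only points needing genuine care are confirming that the $C_5 \cp C_5$ embedding truly satisfies the extensibility hypotheses and remembering to treat $n = 3$ separately rather than through the extension.
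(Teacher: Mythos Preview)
Your proposal is correct and follows essentially the same approach as the paper: the even case via \cite{pck-bica}, the odd case $n\geq 5$ via Lemma~\ref{lm:C5} plus Theorem~1, and $n=3$ via commutativity and Corollary~\ref{co:C3}. You are simply more explicit than the paper about the lower bound (via Overbay's theorem) and about isolating the $n=3$ case, both of which the paper leaves implicit in its one-sentence derivation of the corollaries.
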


\begin{corollary}
If $m=3$ or $5$, $n\geq 3$ odd,  and $H$ is bipartite and dispersable, then $C_m \cp C_n \cp H$ is nearly dispersable.
\end{corollary}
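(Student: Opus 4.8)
The plan is to combine the near-dispersability of $C_m\cp C_n$, already established in the two preceding corollaries, with a general ``product with a dispersable bipartite factor'' principle. Write $G=C_m\cp C_n$ and $d=\Delta(H)$, so that $\Delta(G\cp H)=\Delta(G)+\Delta(H)=4+d$. I would prove the two inequalities $mbt(G\cp H)\le 1+\Delta(G\cp H)$ and $mbt(G\cp H)\ge 1+\Delta(G\cp H)$ separately; the first is a constructive upper bound and the second follows from Overbay's theorem, and together they give exactly near-dispersability.

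For the upper bound I would fix a $5$-page matching book embedding $(\omega_G,c_G)$ of $G$, which exists by the preceding corollaries, and a $d$-page matching book embedding $(\omega_H,c_H)$ of $H$ whose pages $M_1,\dots,M_d$ are noncrossing matchings, which exists because $H$ is dispersable. The layout of $G\cp H$ is taken \emph{en bloc over $H$}: I place the $|H|$ blocks $B_1,\dots,B_{|H|}$ consecutively around the circle in the order $\omega_H=(u_1,\dots,u_{|H|})$, where $B_\ell$ holds the fiber $\{(v,u_\ell):v\in V_G\}$ listed in the order $\omega_G$ when $u_\ell$ lies in one part of the bipartition of $H$ and in the reverse order when $u_\ell$ lies in the other part. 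The edges split into two families: the $G$-edges $(v,u_\ell)(v',u_\ell)$, which lie inside a single block and reproduce a copy of $(\omega_G,c_G)$, and the $H$-edges $(v,u_\ell)(v,u_{\ell'})$, which run between blocks. I color each $G$-edge by $c_G$ (colors $1,\dots,5$) and each $H$-edge by $c_H$ shifted into a disjoint palette (colors $6,\dots,5+d$), so the two families never share a color.

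The crux, and the step I expect to be the main obstacle, is verifying that no two equally-colored edges are adjacent or conflict. Adjacencies follow from the validity of $c_G$ inside a block and of $c_H$ along a fiber. For conflicts, two same-colored $G$-edges either sit in one block, where they inherit noncrossing from $(\omega_G,c_G)$ since reversing a block's order preserves the crossing relation, or sit in disjoint block-arcs and so cannot cross. The delicate case is two $H$-edges of a common color, that is, arising from one matching page $M_\gamma$: since each fiber meets every block in exactly one vertex and the blocks occur around the circle in the order $\omega_H$, every fiber is laid out in precisely the cyclic order of $\omega_H$, so a single fiber faithfully reproduces the noncrossing page $M_\gamma$. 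For two equally-colored $H$-edges over \emph{different} $G$-vertices I would show their chords are nested or separated: when they come from distinct (hence vertex-disjoint) edges of $M_\gamma$ the four blocks involved are distinct and noninterleaved, whereas when they come from the \emph{same} edge $u_\ell u_{\ell'}$ of $H$ the opposite orientations forced on $B_\ell$ and $B_{\ell'}$ by the bipartition make the two chords nested. This last point is exactly where bipartiteness of $H$ is used: equal orientations would instead produce interleaved, crossing chords. Hence the construction is a valid $(5+d)$-page embedding and $mbt(G\cp H)\le 5+d=1+\Delta(G\cp H)$.

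For the lower bound I would invoke Overbay's theorem that a regular dispersable graph must be bipartite. Since $C_m$ is an odd cycle, $G\cp H$ is nonbipartite; taking $H$ regular, as holds for the standard dispersable bipartite graphs such as even cycles, hypercubes, and the complete bipartite graphs $K_{r,r}$, the product $G\cp H$ is regular, so it cannot be dispersable and therefore $mbt(G\cp H)\ge 1+\Delta(G\cp H)$ in view of $mbt\ge\Delta$. Combining the two bounds gives $mbt(C_m\cp C_n\cp H)=1+\Delta(C_m\cp C_n\cp H)$, i.e.\ near-dispersability. I would also remark that the same argument shows that the product of \emph{any} nearly dispersable graph with a regular dispersable bipartite graph is nearly dispersable, the factor $C_m\cp C_n$ being supplied here by the preceding corollaries.
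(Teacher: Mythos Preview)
Your upper-bound argument is exactly the paper's: the paper simply cites the inequality $mbt(G\cp H)\le mbt(G)+mbt(H)$ for bipartite $H$ (from \cite{bk79,pck-bica,so-thesis}) and combines it with Corollaries~\ref{co:C3} and~3.3, while you reprove that inequality from scratch via the en-bloc, bipartition-alternating layout. Your construction is correct and is precisely the standard proof of the cited result, so on this half you and the paper agree completely, with you supplying the details.

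There is, however, a genuine gap in your lower-bound step. You invoke Overbay's theorem, which requires $G\cp H$ to be \emph{regular}, and then add the extra hypothesis that $H$ is regular (``as holds for the standard dispersable bipartite graphs \dots''). But the corollary as stated allows an arbitrary bipartite dispersable $H$ (for instance a path or a star), and for such $H$ the product $C_m\cp C_n\cp H$ is not regular, so Overbay's theorem does not apply and your argument does not rule out $mbt=\Delta$. In fairness, the paper's one-line justification glosses over this same point; its cited inequality gives only the upper bound, and no separate argument for non-dispersability in the irregular case is offered. So either restrict the statement to regular $H$, or supply an additional argument showing that $C_m\cp C_n\cp H$ cannot be dispersable even when $H$ is irregular.
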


For Corollary 3.4 and in the introduction, we extended \cite{pck-bica} from pairwise to finite products: if $H$ is bipartite, then $mbt(G \cp H) \leq mbt(G) + mbt(H)$ for all graphs $G$ \cite[Thm. 4.3]{bk79}, \cite{pck-bica}, \cite[p 87]{so-thesis}, so the product of bipartite dispersable graphs is dispersable and the product of a bipartite dispersable graph and a nearly dispersable graph is nearly dispersable.

\section{Remarks}
\noindent We have given a five stack layout \cite{heath-et-al} of a toroidal grid (circumference 3 or 5) where at most one process starts or ends at a single time step in each stack.
The ideas in our construction,  en bloc layouts and replicating seeds satisfying internal and boundary conditions, may be useful in other types of polymer-like structures such as circulant graphs \cite{jko-2021b}.
If an extensible drawing of $C_7 \cp C_7$ can be found (this involves a finite search that should be implementable via computer), then by replicating the chosen seed, 
one would get a nearly dispersable  embedding of $C_7 \cp C_m$, for each odd $m \geq 7$.  

In particular, the extensibility technique described here allows a finite search, if it is successful, to determine an infinite family of answers.

\end{document}